\documentclass[11pt]{article}
\usepackage{max}

\title{Repeated Angles in the Plane for Angles with Algebraic Tangents}
\author{Max Aires \\ 
Department of Mathematics, Rutgers University, New Brunswick, NJ\\ 
mla162@math.rutgers.edu}

\setlength{\parskip}{6pt}

\begin{document}

\maketitle

\begin{abstract}
We construct a set of points with $\Omega(n^2\log n)$ triples determining an angle $\th$ whenever $\tan(\th)$ is algebraic over $\QQ$, matching the upper bound of Pach and Sharir. This improves upon the original construction, which was optimal only for $\tan(\th)=a\s{m}/b$ with $a,b,m$ positive integers.
\end{abstract}

\section{Introduction}

Pach and Sharir \cite{PaSh92} proved that, for all $0<\th<\pi$, any set of $n$ points in the Euclidean plane contains at most $O(n^2\log n)$ triples which determine an angle with measure $\th$. They also showed that, for $\tan(\th)=\frac{a\s m}{b}$ with $a,b,m$ positive integers, this upper bound is tight, and that there exists a configuration of $n$ points in the plane with $\Omega_\th(n^2\log n)$ triples determining angle $\th$. However, for other $\th$, it remains open whether the upper bound is asymptotically tight (see problem 6, Chapter 6.2 in \cite{BrMoPa}). 

We shall construct a configuration of $n$ points with $\Omega_\th(n^2\log n)$ triples determining angle $\th$ for any $0<\th<\pi$ where $\tan(\th)$ is algebraic over $\QQ$. This class contains many angles of geometric interest; in particular, this gives a construction with $\Omega(n^2\log n)$ triples if $\th$ is any rational multiple of $\pi$, as well as for any constructible angle $\th$.

The question remains open in the case that $\tan(\th)$ is transcendental. We note the similar problem of determining the maximum number of subsets similar to a desired set $S$; Laczkovich and Ruzsa~\cite{LaRu} showed that there exist configurations with $\Omega(n^2)$ copies, matching the trivial upper bound, if and only if the cross-ratio of every quadruple in $S$ is algebraic. Their construction builds upon that of Elekes and Erd\H{o}s~\cite{ElEr91}, which finds similar copies within ``pseudo-grids,'' which are grid-like configurations of points made from generalized arithemetic progressions. Our construction makes use of a similar idea, generalizing the original approach of Pach and Sharir~\cite{PaSh92} from rectangle lattice grids to similar grid-like sets.

\section{Proof}

\begin{thm}
If $\tan(\th)$ is algebraic over $\QQ$, then for $n\ge 3$, there exist arrangements of $n$ points in the Euclidean plane with $\Omega_\th(n^2\ln n)$ triples determining angle $\th$.
\end{thm}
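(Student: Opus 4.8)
The plan is to reduce the angle condition to an arithmetic statement and then to build the point set as a higher-rank analogue of Pach and Sharir's rectangular lattice. Identifying the plane with $\mathbb{C}$, a triple $(A,B,C)$ determines the angle $\th$ at $B$ exactly when $(C-B)/(A-B)=\rho\, e^{i\th}$ for some real $\rho>0$ (or its conjugate, for the opposite orientation). Since $\tan\th$ is algebraic, $\cos\th$ and $\sin\th$ are algebraic, so $\zeta:=e^{i\th}$ generates a number field $L:=\QQ(\cos\th,\sin\th)$ of some degree $d$. After clearing denominators I may fix an algebraic integer $\omega\in\mathcal{O}_L$ with $\arg\omega=\th$; multiplication by $\omega$ is rotation by $\th$ composed with a scaling, and it carries the rank-$d$ lattice $\mathcal{O}_L\subset\mathbb{C}$ into itself. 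In a fixed $\mathbb{Z}$-basis of $\mathcal{O}_L$ this multiplication is a fixed integer matrix $M$, so the condition $\arg(C-B)-\arg(A-B)=\th$ becomes the exact linear-algebraic condition that the coordinate vector of $C-B$ be a positive real multiple of $M$ applied to the coordinate vector of $A-B$.

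Next I would take the point set to be a generalized arithmetic progression (a pseudo-grid) inside $\mathcal{O}_L$: a box $P=\{\sum_{j}c_j\alpha_j:0\le c_j<N_j\}$ for a $\mathbb{Z}$-basis $\alpha_1,\dots,\alpha_d$ with $\prod_j N_j\asymp n$. Because the $\alpha_j$ are $\QQ$-linearly independent, the coordinate map is injective, so $P$ is $n$ distinct points on which the angle condition is governed entirely by $M$. This is the direct generalization of the degree-$2$ case: when $\tan\th=a\s{m}/b$ the field $L$ is imaginary quadratic, $\mathcal{O}_L$ is an honest rank-$2$ lattice stable under rotation by $\th$, and the Pach--Sharir rectangular grid is exactly a box in $\mathcal{O}_L$. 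There the count is a clean incidence sum: fixing the apex $B$ and a primitive lattice direction $\mathbf p$, the $A$'s lie on the ray through $B$ in direction $\mathbf p$ and the $C$'s on the ray in direction $M\mathbf p$, contributing about $N/\|\mathbf p\|$ and $N/\|M\mathbf p\|$ points; summing over apexes and directions gives $n^2\sum_{\mathbf p}\|\mathbf p\|^{-2}\asymp n^2\log n$, the logarithm being the harmonic (divisor) sum over the $\asymp\s{n}$ available scales.

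The heart of the argument, and the step I expect to be hardest, is recovering this logarithmic factor when $d>2$. The obstruction is genuinely arithmetic: for $\deg\tan\th>2$ no rank-$2$ lattice is stable under rotation by $\th$ (this would force $e^{i\th}$ into an imaginary quadratic field), and the real $2$-plane on which $M$ acts by a scaled rotation through $\th$ is defined only over $L$, hence meets $\mathbb{Z}^d$ in $\{0\}$. Consequently $\mathcal{O}_L$ is dense in $\mathbb{C}$, and the easy count above --- restricting $A-B$ and $C-B$ to a single pair of conjugate primitive directions --- yields the correct main term $\Omega(n^2)$ but collapses the harmonic sum to a convergent one, losing the logarithm. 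I therefore expect the extra $\log n$ must be harvested from the $(d-2)$-dimensional family of lattice vectors sharing each planar direction, or equivalently by summing over a full dyadic range of side-ratios $\rho$ while shaping the side lengths $N_j$ so that the contributing directions behave two-dimensionally without destroying their collinear partners.

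Finally, I would assemble the estimate by choosing the generators $\alpha_j$ and side lengths so that, for each of $\asymp\log n$ scales $\rho$, the number of exact angle-$\th$ triples at that scale is $\Omega(n^2)$ with the scales contributing disjoint triples, and then summing. Verifying that these families are genuinely disjoint and that the per-scale counts do not degrade --- which is where the Elekes--Erd\H{o}s and Laczkovich--Ruzsa pseudo-grid technique enters, guaranteeing $\Omega(n^2)$ exact similar copies of each fixed three-point shape $\{0,1,\rho\zeta\}$ inside one common progression --- would complete the proof and match the Pach--Sharir upper bound.
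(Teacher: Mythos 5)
You have correctly identified the framework (a generalized arithmetic progression in a ring stable under multiplication by a fixed algebraic number of argument $\th$) and you have correctly located the crux: for $\deg\tan\th>2$ no rank-$2$ lattice is rotation-stable, and a naive count over a single direction loses the logarithm. But the proposal stops exactly there. The passage from ``$\Omega(n^2)$ triples per fixed shape'' to ``$\Omega(n^2\log n)$ in total'' is something you ``expect'' to harvest from a dyadic range of ratios $\rho$, with disjointness and uniformity of the per-scale constants left ``to be verified''; that is the entire content of the theorem, not a routine check. In particular, Elekes--Erd\H{o}s gives, for each \emph{fixed} shape $\{0,1,\rho e^{i\th}\}$, a point set (constructed from that shape) with $\Omega_\rho(n^2)$ similar copies; to sum over $\asymp\log n$ values of $\rho$ you need a \emph{single} $n$-point set in which all $\log n$ shapes simultaneously have $\Omega(n^2)$ copies with an implied constant independent of $\rho$, and nothing in your outline produces that. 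There is also a structural problem with working inside the rank-$d$ module $\mathcal{O}_L$: the triples you need have the form $(B,\,B+\la v,\,B+\omega\mu v)$ with \emph{real} scalars $\la,\mu$, and the real multipliers carrying $\mathcal{O}_L$ into itself form a much smaller set (essentially the real subring), so the family of dilations available in each direction is not visibly large.

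The paper closes both gaps with two concrete devices absent from your sketch. First, it works in the rank-$2d$ module $\G_t=\K_t+i\K_t$ with $\K_t=\{\sum_k a_k\al^k : |a_k|\le t\}$, which is closed under addition and multiplication with only constant-factor growth of the parameter $t$, and which---crucially---admits $\approx t^{d}$ genuine real scalars $\la\in\K_t$ acting on it by dilation. Second, the logarithm comes from a direction-versus-dilation tradeoff made quantitative by Ungar's theorem: $\G_{2k}$ determines at least $(2k)^{2d}$ distinct directions, so one can reserve $|T_k|\approx (2k)^{2d-1}$ \emph{new} directions at scale $k$ and pair each direction $v\in T_k$ with $\approx (t/k)^{2d}$ pairs of positive scalars $(\la_1,\la_2)$ from $\K_{\lf t/k\rf}$, giving $\sum_{k\le t} k^{2d-1}(t/k)^{2d}\approx t^{2d}\sum_{k\le t}1/k\approx t^{2d}\log t$ distinct configurations $(\la_1 v,\,(b+i\al)\la_2 v)$ per apex, hence $t^{4d}\log t$ triples among the $O(t^{2d})$ points of a bounded dilate $\G_{O(t)}$; distinctness is immediate because the directions are distinct and the scalars are distinct positive reals. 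Without a substitute for these two steps---a multiplicatively closed module admitting many real dilations, and a quantitative supply of fresh directions at each scale---your outline does not yield the theorem.
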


\begin{proof}
Let $\tan(\th)$ be a algebraic. Then we can write $\tan(\th)=\f{\al}{b}$, where $b\in\ZZ$ and $\al$ is a positive real algebraic integer. Then we can write $\al$ as the root of a monic, irreducible polynomial of degree $d$ in $\ZZ[x]$. So $\ZZ[\al]$ is a $\ZZ$-module with basis $1,\al,\dots,\al^{d-1}$.

We shall represent the Euclidean plane as $\CC$. Note that $b+i\al$ will have argument $\th$, so we seek triples of the form $z,z+\la_1v,z+(b+i\al)\la_2v$, with $z,v\in \CC$ and $\la_1,\la_2\in \RR$. Fix some positive integer $t$ (to be determined later), and define $\K_t$ and $\G_t$ as
$$\K_t:=\ll\{\sum_{k=0}^{d-1}a_k\al^k : a_k\in \ZZ, |a_k|\le t\rr\}$$
$$\G_t:=\K_t+i\K_t=\{u_1+iu_2:u_1,u_2\in \K_t\}$$
 
Note that $\G_t$ is the cartesian product of two real generalized arithmetic progression, while the pseduo-grids of Elekes and Erd\H{o}s~\cite{ElEr91} are one complex generalized arithmetic progression. Clearly, $|\K_t|=(2t+1)^d$ and $|\G_t|=(2t+1)^{2d}$. 

First, suppose $\mu\in \K_s$ and $\la\in \K_t$; clearly, $\mu+\la\in \K_{s+t}$. We shall now consider $\mu\la$. Since $\al$ is an algebraic integer with minimal polynomial of degree $d$, we can write $\al^d=c_0+c_1\al +\dots+c_{d-1}\al^{d-1}$ for $c_i\in \ZZ$; let $C_1=1+\max(|c_0|,|c_1|,\dots,|c_{d-1}|)$. By writing $\la=a_0+a_1\al+\dots+a_{d-1}\al^{d-1}$ with $a_i\in \ZZ$, $|a_i|\le t$, we see that $\al \la=(a_0\al+\dots+a_{d-2}\al^{d-1})+a_{d-1}(c_0+\dots+c_{d-1}\al^{d-1})\in \K_{C_1t}$. Applying this observation repeatedly, we see that $\al^k\la\in \K_{C_1^k t}$. Hence, if we write $\mu =b_0+b_1\al+\dots+b_{d-1}\al^{d-1}$, we see that $\mu\la=b_0\la+b_1\al \la +\dots+b_{d-1}\al^{d-1}\la\in \K_{dC_1^{d-1}st}$. 

Now suppose $u=(u_1+iu_2)\in \G_s$ and $v=(v_1+iv_2)\in \G_t$. Then $u+v=(u_1+u_2)+i(v_1+v_2)\in \G_{s+t}$, and $uv=(u_1+iu_2)(v_1+iv_2)=(u_1v_1-u_2v_2)+i(u_1v_2+u_2v_1)\in \G_{2dC_1^{d-1}st}$. Letting $C_2=2dC_1^{d-1}$, we have that $uv\in \G_{C_2st}$.

Ungar \cite{Un82} proved that a set of $N$ points forms has at least $N-1$ pairs of points each forming a distinct direction. By applying this to the set $\G_t$, we see that $\G_t$ contains at least $(2t+1)^{2d}-1\ge (2t)^{2d}$ pairs giving distinct directions. Since $\G_t-\G_t\subseteq \G_{2t}$, this means that the nonzero elements of $\G_{2t}$ determine at least $(2t)^{2d}$ distinct arguments (directions) from $[0,\pi)$.

Now, for each $1\le k\le t$, pick $T_k$ as $(2k)^{2d}-(2(k-1))^{2d}\ge (2k)^{2d-1}$ elements from $\G_{2k}$ whose angles are all distinct from each other and those of $T_1,\dots, T_{k-1}$.

For each choice of $v\in \bigcup_{k=1}^t T_k$ and each $0<\la\in \K_{\lf t/k\rf}$, the element $\la v$ is distinct, since each $v\in T_k$ has a distinct angle and each $\la$ is a unique positive real number. Moreover, if $v\in T_k\subseteq \G_{2k}$ and $\la\in \K_{\lf t/k\rf}$, $\la v\in \G_{C_2(2k)\lf t/k\rf}\subseteq \G_{2C_2t}$. Therefore, for any choice of $1\le k\le t$ and any $z\in \G_t$, $v\in T_k\subseteq\G_{2k}$, and $\la_1,\la_2\in \K_{\lf t/k\rf}$ with $\la_1,\la_2>0$, the triple 
$$(z,z+\la_1 v,z+(b+i \al)\la_2v)$$ 
is a unique triple determining angle $\th$. Observe that $z+\la_1v\in \G_{t+2C_2t}$ and (since $b+i\al\in \G_{|b|}$) $z+(b+\al i)\la_2v\in \G_{t+C_2|b|(2C_2t)}=\G_{(1+2|b|C_2^2)t}$, so for all choices of $z,v,\la_1,\la_2$, we have $z,z+\la_1v,z+(b+i\al)v\in \G_{(1+2|b|C_2^2)t}$. Since the elements of $\K_{\lf t/k\rf}$ are equally distributed between positive and negative, with one element $0$, the number of choices for $\la_1$ and $\la_2$ is $\ll(\f{|\K_{\lf t/k\rf}|-1}{2}\rr)= \f{(2\lf \f{t}{k}\rf+1)^{2d}-1}{2}\ge\lf\f{t}{k}\rf^{2d}$. Hence the total number of triples chosen in this way is
\begin{align*}
|\G_t|\sum_{k=1}^t|T_k|\lf\f{t}{k}\rf^{2d}
&\ge (2t+1)^{2d}\sum_{k=1}^t(2k)^{d-1}\ll(\f{t}{2k}\rr)^{2d}\\
&\ge t^{2d}\sum_{k=1}^t k^{2d-1}\ll(\f{t}{k}\rr)^{2d}\\
&=t^{4d}\sum_{k=1}^t \f{1}{k}\\
&\ge t^{4d}\ln t
\end{align*}
Hence, there are at least $t^{4d}\ln t$ triples which determine angle $\th$ with elements chosen from $\G_{(1+2|b|C_2^2)t}$. This set contains $(2(1+2|b|C_2^2)t+1)^{2d}\le (4|b|C_2^2+3)^{2d}t^{2d}$, so if we let $C_3=(4|b|C_2^2+3)^{2d}$, we have a set of at most $C_3t^{2d}$ points whose elements form at least $t^{4d}\ln t$ triples.

Now, for an arbitrary $n$, pick $t$ so that $C_3t^{2d}<n\le C_3(t+1)^{2d}$. If $n\ge C_3$, then we have a set with $C_3t^{2d}\le n$ points. Moreover, if $t\ge1$, then $n\le C_3(2t)^{2d}$, so $t\ge \f{(n/C_3)^{1/2d}}{2}$, so the number of triples determining angle $\th$ is at least $t^{4d}\ln t\ge \f{(n/C_3)^2}{2^{4d}}\ln \ll(\f{(n/C_3)^{1/2d}}{2}\rr)=\f{n^2}{2dC_3^22^{4d}}\ln \ll(\f{n}{2^{2d}C_3}\rr)$. Since $d$ and $C_3$ depend only on our angle $\th$ and not on $n$, we have a construction of at most $n$ points with $\Omega_\th(n^2\log n)$ triples determining angle $\th$, as desired.
\end{proof}


\end{document}